\theoremstyle{plain}
\newtheorem{thm}{Theorem}
\newtheorem{lem}[thm]{Lemma}
\newtheorem{cor}[thm]{Corollary}
\newtheorem{proposition}[thm]{Proposition}
\theoremstyle{definition}
\theoremstyle{remark}
\journal{Linear and Multilinear Algebra}
\begin{document}
\title{Enumeration of spanning trees of middle digraphs}
\author{Xuemei Chen\\
\small School of Mathematical Sciences\\[-0.8ex]
\small Xiamen University\\[-0.8ex]
\small P. R. China\\
Xian'an Jin\\
\small School of Mathematical Sciences\\[-0.8ex]
\small Xiamen University\\[-0.8ex]
\small P. R. China\\
Weigen Yan\footnote{Corresponding author.}\\
\small School of Sciences\\[-0.8ex]
\small Jimei University\\[-0.8ex]
\small P. R. China\\
\small{\tt Email: xuemeichen@stu.xmu.edu.cn; xajin@xmu.edu.cn; weigenyan@jmu.edu.cn}
}

\begin{abstract}
Let $D$ be a connected weighted digraph. The relation between the vertex weighted complexity (with a fixed root) of the line digraph of $D$ and the edge weighted complexity (with a fixed root) of $D$ has been given in (L. Levine, Sandpile groups and spanning trees of directed line graphs, J. Combin. Theory Ser. A 118 (2011) 350-364) and, independently, in (S. Sato, New proofs for Levine's theorems,  Linear Algebra Appl. 435 (2011) 943-952).
In this paper, we obtain a relation between the vertex weighted complexity of the middle digraph of $D$ and the edge weighted complexity of $D$. Particularly, when the weight of each arc and each vertex of $D$ is 1, the enumerative formula of spanning trees of the middle digraph of a general digraph is obtained.
\end{abstract}

\begin{keyword}
Spanning tree, Digraph, Middle digraph, Laplacian matrix.
\vskip0.2cm

\end{keyword}

\maketitle

\section{Introduction}
Graphs and digraphs considered in this paper are all simple and finite, if not specified. Let $D=(V(D),A(D),\chi)$ be a connected weighted digraph with vertex set $V(D)$, arc set $A(D)$ and weight function $\chi:V(D)\cup A(D)\rightarrow(0,\infty)$. Set $\chi_{i}=\chi_{v_{i}}=\chi(v_{i})$ for $v_{i}\in V(D)$, and $\chi_{ij}=\chi_e=\chi(e)$ for $e=(v_{i},v_{j})\in A(D)$.

For any arc $e=(u,v)\in A(D)$, we set $u=t(e)$, the tail of $e$,  and $v=h(e)$, the head of $e$. For any vertex $v\in V(D)$, we set
$$d_{+}(v)=|\{e\in A(D)| t(e)=v\}|, d_{-}(v)=|\{e\in A(D)| h(e)=v\}|.$$

A spanning tree of $D$ is a connected subdigraph containing all vertices of $D$, having no cycles, in which one vertex $u$ (the root) has outdegree 0 (i.e., $d_+(u)=0$), and every other vertex has outdegree 1. Let $\mathcal{T}(D)$ be the set of all spanning trees of $D$ and $\mathcal T(D,v)$ be the set of spanning trees of $D$ with root $v$. Define the {\it edge weighted complexity} and the {\it vertex weighted complexity} of the digraph $D$ as follows:
\begin{center}
$\kappa ^{edge}(D,\chi)=\sum\limits_{T\in \mathcal{T}(D) } \prod \limits_{e\in A(T)}\chi _{e}$,
\end{center}
\begin{center}
$\kappa ^{vertex}(D,\chi)=\sum\limits_{T\in \mathcal{T}(D)} \prod \limits_{e\in A(T)}\chi _{h(e)}$.
\end{center}

For a fixed vertex $v\in V(D)$, define two polynomials as follows:
\begin{center}
$\kappa ^{edge}(D,v,\chi)=\sum\limits_{T\in \mathcal{T}(D,v)} \prod \limits_{e\in A(T)}\chi _{e}$,
\end{center}
\begin{center}
$\kappa ^{vertex}(D,v,\chi)=\sum\limits_{T\in \mathcal{T}(D,v)} \prod \limits_{e\in A(T)}\chi _{h(e)}$.
\end{center}

Let $D$ be a weighted digraph. The {\it edge-weighted Laplacian matrix $\Delta^{edge}$} and {\it vertex-weighted Laplacian matrix $\Delta^{vertex}$} are defined as follows:
$$
\Delta^{edge}=
\begin{pmatrix}
 \sum\limits_{j\neq1}\chi_{1j}& -\chi_{12}  & \cdots   &-\chi_{1n}   \\
-\chi_{21} & \sum\limits_{j\neq2}\chi_{2j}  & \cdots   & -\chi_{2n}  \\
\vdots & \vdots  & \ddots   & \vdots  \\
-\chi_{n1} &-\chi_{n2}  & \cdots\  & \sum\limits_{j\neq n}\chi_{nj} \\
\end{pmatrix},
$$
$\Delta^{vertex}=(d_{uv})_{u,v\in V(D)}$, where
\[d_{uv}=\begin{cases}
\sum_{t(e)=u}\chi(h(e))&u=v,\\
-\chi(v)& (u,v)\in A(D),\\
0& (u,v)\not \in A(D).
\end{cases}\]

In case that $(v_i,v_j)$ is not an arc, $\chi_{ij}=0$.

The problem related to the enumeration of spanning trees is one of basic problems not only in the field of algebraic graph theory but also in electric circuit theory, which has been investigated for more than 160 years. There are many methods to enumerate spanning trees of graphs and digraphs. For example, we can use the {\it matrix tree theorem} (see \cite{NB,JU,RH}) to enumerate spanning trees of the general graphs and the general digraphs. For the weighted digraph $D$, we can use the following {\it generalized matrix tree theorem}:

\begin{thm}(Generalized Matrix Tree Theorem)\label{thm1.3}

Let $D$ be a finite digraph. Then
\begin{center}
$\kappa ^{edge}(D,v,\chi)=\det((\Delta ^{edge})_{vv})$,
\end{center}
\begin{center}
$\kappa ^{vertex}(D,v,\chi)=\det((\Delta ^{vertex})_{vv})$,
\end{center}
where $(\Delta ^{edge})_{vv}$ and $(\Delta ^{vertex})_{vv}$ are two matrices obtained from $\Delta ^{edge}$ and $\Delta ^{vertex}$ by deleting row $v$ and column $v$ of $\Delta ^{edge}$ and $\Delta ^{vertex}$, respectively.

Furthermore,
\begin{center}
$\kappa ^{edge}(D,\chi)=tr(adj \Delta ^{edge})=\sum\limits_{v\in V(D)}\kappa ^{edge}(D,v,\chi),$
\end{center}
\begin{center}
$\kappa ^{vertex}(D,\chi)=tr(adj \Delta ^{vertex})=\sum\limits_{v\in V(D)}\kappa ^{vertex}(D,v,\chi)$,
\end{center}
where $adj A$ is the cofactor matrix of a square matrix $A$.
\end{thm}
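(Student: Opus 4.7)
The plan is to prove the cofactor identities by factoring each Laplacian as a product of two rectangular matrices and applying the Cauchy--Binet formula; the trace formulas will then be immediate corollaries.

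First I would introduce the incidence matrix $B$ of size $n\times m$ (where $m=|A(D)|$) defined by $B_{t(e),e}=1$, $B_{h(e),e}=-1$, and zero elsewhere. Together with the ``tail-weight'' matrix $C$ defined by $C_{t(e),e}=\chi_e$ (zero elsewhere), a direct entrywise check yields $\Delta^{edge}=CB^{T}$: the $(u,u)$ entry becomes $\sum_{e:\,t(e)=u}\chi_e=\sum_{j\ne u}\chi_{uj}$, and for $u\ne w$ the only surviving term comes from the arc $e=(u,w)$, giving $-\chi_{uw}$. Replacing $C$ by $C'$ with $C'_{t(e),e}=\chi(h(e))$ produces, by the same calculation, $\Delta^{vertex}=C'B^{T}$.

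Next, deleting row $v$ from both factors gives $(\Delta^{edge})_{vv}=C_vB_v^{T}$, and Cauchy--Binet yields
\[
\det((\Delta^{edge})_{vv})=\sum_{S}\det(C_v[S])\,\det(B_v[S]),
\]
where $S$ ranges over $(n-1)$-subsets of $A(D)$. The column of $C$ indexed by $e$ has a single nonzero entry at row $t(e)$, so $\det(C_v[S])$ is nonzero precisely when each non-root vertex is the tail of exactly one arc of $S$; after reordering, $C_v[S]$ is diagonal with entries $\chi_e$, so its determinant equals $\prod_{e\in S}\chi_e$. It is a classical fact that $\det(B_v[S])=\pm 1$ iff the underlying edges of $S$ form a spanning tree of $D$. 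The intersection of these two conditions is exactly the set $\mathcal{T}(D,v)$ of spanning in-trees rooted at $v$. An ordering of the non-root vertices by distance to $v$ (from farthest to nearest) makes $B_v[S]$ upper triangular with $+1$s on the diagonal, showing that the sign is always $+1$. This yields $\det((\Delta^{edge})_{vv})=\kappa^{edge}(D,v,\chi)$, and running the same argument with $C'$ in place of $C$ produces the vertex-weighted analogue.

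For the trace formula, since every spanning tree has a unique root, $\mathcal{T}(D)=\bigsqcup_{v\in V(D)}\mathcal{T}(D,v)$, so the total complexity splits as a sum over $v$. Because the diagonal entry of the adjugate is a principal $(n-1)$-minor with sign $(-1)^{v+v}=+1$, we have $(\mathrm{adj}\,\Delta^{edge})_{vv}=\det((\Delta^{edge})_{vv})$, and summing over $v$ gives $\kappa^{edge}(D,\chi)=\mathrm{tr}(\mathrm{adj}\,\Delta^{edge})$; the vertex version is identical.

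The main technical subtlety, and the step that needs the most care, is the sign analysis for $\det(B_v[S])$. Asserting $\pm 1$ is standard, but to pin down that the sign is always $+1$ for a spanning in-tree to $v$, one must fix compatible orderings of the rows (non-root vertices) and columns (arcs of $S$) using the natural tree order toward $v$; doing this uniformly across all $S$ is the only step that is not purely mechanical.
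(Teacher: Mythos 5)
Your proof is correct, but note that the paper itself offers no proof of this theorem: it is stated as a known result and deferred to Orlin (Section 5 of the line-digraph paper) for the edge-weighted version and to Chung--Langlands for the vertex-weighted version. So there is no ``paper proof'' to match; what you have written is a self-contained argument that the authors chose not to include. On its own merits your route is sound: the factorizations $\Delta^{edge}=CB^{T}$ and $\Delta^{vertex}=C'B^{T}$ check out entrywise (using that the digraph is simple and loopless, so $t(e)\neq h(e)$ and there is at most one arc per ordered pair), Cauchy--Binet applies after deleting row $v$ from both factors, and the two nonvanishing conditions --- every non-root vertex the tail of exactly one arc of $S$, and $S$ a spanning tree of the underlying undirected graph --- together characterize exactly the in-trees rooted at $v$, which matches the paper's definition of a spanning tree with root $v$. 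The one step you rightly flag as delicate, the sign, is handled correctly provided you emphasize that the column reordering of $C_v[S]$ (placing column $e$ at the position of row $t(e)$) and the row/column reordering of $B_v[S]$ must be applied \emph{simultaneously}, since only the product $\det(C_v[S])\det(B_v[S])$ is reordering-invariant; with rows ordered by decreasing tree-distance to $v$ the matrix $B_v[S]$ is in fact lower (not upper) triangular with $+1$ diagonal, but this is immaterial. The final reduction of the trace identities to the disjoint union $\mathcal{T}(D)=\bigsqcup_{v}\mathcal{T}(D,v)$ and the fact that the diagonal cofactors are the principal minors is also correct. A standard alternative, closer to Chung--Langlands, is induction via deletion--contraction or a sign-reversing involution on cycle-containing functional subgraphs; your Cauchy--Binet argument has the advantage of treating the edge- and vertex-weighted cases uniformly by merely swapping $C$ for $C'$.
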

For the proof of the {\it generalized matrix tree theorem}, see for example Section 5 in \cite{JBO} for the edge-weighted version, and Theorems 1 and 2 in \cite{FR} for the vertex-weighted version.

The (vertex-)weighted line digraph of a weighted digraph $D$ with vertex set $V(D)$, arc set $A(D)$, and the weight function is $\chi$, denoted by $L(D)$, has vertex set $A(D)$, and for any two arcs $e_1=(u_1,v_1), e_2=(u_2,v_2)\in A(D)$, there exists an arc $(e_1,e_2)$ in $L(D)$ if and only if $v_1=u_2$, and the weight of each vertex $e$ of $L(D)$ equals the weight of the edge $e$ in $D$. For the weighted line digraph $L(D)$ of the digraph $D$, the relationship between the vertex weighted complexity of the weighted line graph $L(D)$ and the edge weighted complexity of the original digraph $D$ has been given by Levine \cite{LL}. Independently, Sato \cite{IS} presented a new proof for two Levine's Theorems by using the {\it generalized matrix tree theorem}.
\begin{thm}\cite{LL}
Let $D$ be a finite weighted digraph. Then
\begin{eqnarray}
\kappa^{vertex}(L(D),\chi)=\kappa^{edge}(D,\chi)\displaystyle\prod_{i = 1}^{n}d_{i}^{r_{i}-1}.\label{eq1}
\end{eqnarray}
where L(D) is the line digraph of $D$, $n=|V(D)|$, $m=|A(D)|$ and $r_{i}=d_{-}(v_{i})$, $d_{i}=\sum\limits_{t(e)=v_{i}}\chi_{e}$.
\end{thm}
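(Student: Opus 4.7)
The plan is to follow Sato's linear-algebraic strategy via the generalized matrix tree theorem (Theorem 1.3). The essential idea is to factor both Laplacians through the same rectangular incidence-type matrices and then apply a Schur-complement/Sylvester-style determinant identity to relate the full characteristic polynomials, from which the tree-counting identity falls out by evaluating at $x=0$.

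First I would introduce four auxiliary matrices. Let $P$ be the $m\times n$ matrix with $P_{e,v}=1$ if $h(e)=v$ and $0$ otherwise, and let $Q$ be the $n\times m$ matrix with $Q_{v,e}=\chi_e$ if $t(e)=v$ and $0$ otherwise. Let $\Lambda=\mathrm{diag}(d_{h(e)})_{e\in A(D)}$ (an $m\times m$ diagonal matrix) and $\mathbf{D}=\mathrm{diag}(d_v)_{v\in V(D)}$ (an $n\times n$ diagonal matrix). A direct entrywise check shows
$$\Delta^{vertex}(L(D))=\Lambda-PQ,\qquad \Delta^{edge}(D)=\mathbf{D}-QP,$$
so the same pair $(P,Q)$ produces both Laplacians, one from each side.

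Next I would apply the two-block Schur identity to
$$\begin{pmatrix} xI_m-\Lambda & P\\ Q & I_n\end{pmatrix},$$
computing its determinant by eliminating first the lower-right block and then (assuming $x\neq d_v$ for all $v$) the upper-left block. This yields
$$\det(xI_m-\Delta^{vertex}(L(D)))=\det(xI_m-\Lambda)\cdot\det\bigl(I_n+Q(xI_m-\Lambda)^{-1}P\bigr).$$
Because $(xI_m-\Lambda)^{-1}$ is diagonal with entries $1/(x-d_{h(e)})$, an entrywise computation gives $Q(xI_m-\Lambda)^{-1}P=QP\cdot\mathrm{diag}(1/(x-d_v))$. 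Combining this with $\det(xI_m-\Lambda)=\prod_v(x-d_v)^{r_v}$ produces the central polynomial identity
$$\det(xI_m-\Delta^{vertex}(L(D)))=\prod_{v\in V(D)}(x-d_v)^{r_v-1}\cdot\det(xI_n-\Delta^{edge}(D)).$$

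Finally, I would extract the tree counts. Since $D$ is connected, both $\Delta^{vertex}(L(D))$ and $\Delta^{edge}(D)$ have simple zero eigenvalue, so each characteristic polynomial has exactly one factor of $x$. Dividing both sides of the identity above by $x$, evaluating at $x=0$, and tracking the sign via $(-1)^{m-1}$ on the left versus $(-1)^{(m-n)+(n-1)}=(-1)^{m-1}$ on the right, I obtain $\kappa^{vertex}(L(D),\chi)=\kappa^{edge}(D,\chi)\prod_v d_v^{r_v-1}$, using Theorem~\ref{thm1.3} which identifies $\mathrm{tr}(\mathrm{adj}\,M)$ with the product of the nonzero eigenvalues of such a Laplacian. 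The main obstacle is the middle step: verifying that $Q(xI_m-\Lambda)^{-1}P$ simplifies cleanly to a shifted diagonal rescaling of $QP$, which is exactly what makes the cancellation between the $\prod(x-d_v)^{r_v}$ factor from $L(D)$ and the spurious denominator from the Schur complement work; after that, the identity is essentially bookkeeping of signs and multiplicities.
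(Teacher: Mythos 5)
Your argument is correct and follows essentially the same route the paper itself relies on: the theorem is quoted from Levine/Sato without reproof, but the paper's own proofs of the middle-digraph analogues (Lemma \ref{lem1.2} and Theorem \ref{thm1.1}) use exactly your strategy --- factor both Laplacians through a common pair of incidence-type matrices, apply $\det(I_m-XY)=\det(I_n-YX)$ (equivalently your Schur complement) to get a characteristic-polynomial identity, and extract $\mathrm{tr}(\mathrm{adj}\,\cdot)$ from the coefficient of $\lambda$. The only blemish is a sign in your displayed block matrix (one off-diagonal block should be negated so the Schur complement produces $xI_m-\Lambda+PQ$ rather than $xI_m-\Lambda-PQ$), but the determinant identity you then state is the correct one and the rest goes through.
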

\begin{thm}\cite{LL}
Let $D$ be a finite weighted digraph, and $e_{*}=(w_{*},v_{*})$ be an arc of $D$. Suppose $d_{-}(v)\geq 1$ for all vertices $v\in V(D)$. Then
\begin{eqnarray}\kappa^{vertex}(L(D),e_{*},\chi)=\chi_{e_{*}}\kappa^{edge}(D,w_{*},\chi)d_{v_{*}}^{r_{v_{*}}-2}\displaystyle\prod_{v\neq v_{*}}d_{v}^{r_{v}-1}.\label{eq2}
\end{eqnarray}
\end{thm}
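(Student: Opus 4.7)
My plan is to follow a matrix-theoretic route (in the spirit of Sato) and derive the identity by computing a single principal minor of a block matrix in two different ways via Schur complements. Let $B$ and $T$ denote the unweighted head- and tail-incidence matrices of $D$ (each $n\times m$) and set $W=\mathrm{diag}(\chi_e)$. A short check gives $\Delta^{edge}(D)=D^{+}-TWB^{T}$, where $D^{+}=\mathrm{diag}(d_v)_{v\in V(D)}$, and analogously $\Delta^{vertex}(L(D))=D^{h}-B^{T}TW$, where $D^{h}=\mathrm{diag}(d_{h(e)})_{e\in A(D)}$ has determinant $\prod_v d_v^{r_v}$. I would carry out the argument assuming $D^{+}$ is invertible (the general case then follows by a polynomial identity / continuity argument in the weights).

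The central device is the $(n+m)\times(n+m)$ block matrix
\[
M=\begin{pmatrix} D^{+} & -TW\\ -B^{T} & I_m\end{pmatrix},
\]
and the goal is to evaluate $\det(M_{\hat e_*,\hat e_*})$, the principal minor obtained by deleting the row and column indexed by $e_*$. Eliminating the $I_{m-1}$ block via Schur complement gives
\[
\det(M_{\hat e_*,\hat e_*})=\det\bigl(\Delta^{edge}(D)+\chi_{e_*}\,e_{w_*}e_{v_*}^{T}\bigr),
\]
because removing the $e_*$-column of $TW$ and the $e_*$-row of $B^{T}$ subtracts exactly the rank-one contribution $\chi_{e_*}e_{w_*}e_{v_*}^{T}$ from $TWB^{T}$. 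Alternatively, eliminating the $D^{+}$ block and using the factorization $\Delta^{vertex}(L(D))=D^{h}\bigl(I_m-B^{T}(D^{+})^{-1}TW\bigr)$ together with the diagonality of $D^{h}$ yields
\[
\det(M_{\hat e_*,\hat e_*})=\det(D^{+})\,\det\bigl((D^{h})^{-1}_{\hat e_*,\hat e_*}\bigr)\,\kappa^{vertex}(L(D),e_*,\chi)=d_{v_*}\Bigl(\prod_v d_v^{1-r_v}\Bigr)\kappa^{vertex}(L(D),e_*,\chi),
\]
where the last equality invokes Theorem~\ref{thm1.3}.

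Finally, I would simplify $\det\bigl(\Delta^{edge}(D)+\chi_{e_*}e_{w_*}e_{v_*}^{T}\bigr)$ by the matrix determinant lemma. Since $\det\Delta^{edge}(D)=0$, this equals $\chi_{e_*}\,\bigl(\operatorname{adj}\Delta^{edge}(D)\bigr)_{v_*w_*}$. The row-sum-zero identity $\Delta^{edge}(D)\mathbf{1}=0$ forces every column of $\operatorname{adj}\Delta^{edge}(D)$ to lie in $\mathrm{span}(\mathbf{1})$, so its entries are constant down each column; hence the off-diagonal $(v_*,w_*)$ entry coincides with the diagonal $(w_*,w_*)$ entry, which Theorem~\ref{thm1.3} identifies as $\kappa^{edge}(D,w_*,\chi)$. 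Equating the two evaluations of $\det(M_{\hat e_*,\hat e_*})$ and solving for $\kappa^{vertex}(L(D),e_*,\chi)$ delivers the stated formula.

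The main obstacle I anticipate is the combinatorial bookkeeping: carefully tracking what removing a single row and column from the block matrix does to the product $TWB^{T}$ (the rank-one correction), and confirming that the invertibility of $D^{+}$ assumed in the Schur step can be relaxed to the theorem's hypothesis $d_{-}(v)\ge 1$ via a limiting argument, since $d_{-}(v)\ge 1$ controls $D^{h}$ directly but only forces $D^{+}$ to be invertible after a generic perturbation of the weights.
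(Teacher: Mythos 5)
The paper does not actually prove this statement: Theorem 3 is imported from Levine \cite{LL}, with \cite{IS} cited for a linear-algebraic proof, so there is no in-paper argument to compare against. Your proof is correct as far as I can check. Both Schur evaluations of the bordered minor are right: deleting the row and column indexed by $e_*$ removes exactly the rank-one term $\chi_{e_*}e_{w_*}e_{v_*}^{T}$ from $TWB^{T}$, and the factorization $\Delta^{vertex}(L(D))=D^{h}\bigl(I_m-B^{T}(D^{+})^{-1}TW\bigr)$ holds because the $e$-row of $B^{T}$ selects the single vertex $h(e)$. The matrix determinant lemma together with the fact that $\operatorname{adj}\Delta^{edge}(D)$ has constant columns (its columns lie in $\ker\Delta^{edge}(D)$, which equals $\operatorname{span}(\mathbf 1)$ when the rank is $n-1$, the adjugate vanishing otherwise) correctly converts the off-diagonal entry $(\operatorname{adj}\Delta^{edge}(D))_{v_*w_*}$ into the principal cofactor $\kappa^{edge}(D,w_*,\chi)$. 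Your route is essentially Sato's, and it is also the same machinery the paper deploys for its own middle-digraph analogue (the Proposition on $\kappa^{vertex}(M(D),e_*,\chi)$): there the authors likewise start from the minor $\det((\Delta^{vertex})_{e_*e_*})$ supplied by Theorem \ref{thm1.3}, apply a Schur complement, and invoke $\det(I-XY)=\det(I-YX)$, but they then expand the resulting $n\times n$ determinant by multilinearity in the $w_*$ row to extract $\chi_{e_*}\kappa^{edge}(D,w_*,\chi)$; your matrix-determinant-lemma/constant-adjugate step achieves that reduction more cleanly. One small correction to your closing remark: the hypothesis $d_-(v)\ge 1$ guarantees $r_v\ge 1$, i.e., that the exponents $r_v-1$ on the right-hand side are nonnegative; it does not by itself make $D^{h}$ (whose entries are the out-degree sums $d_{h(e)}$) or $D^{+}$ invertible, so the generic-weights/polynomial-identity argument is indeed needed for both, and should be phrased as clearing the single possible negative power $d_{v_*}^{-1}$ before specializing.
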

For a weighted digraph $D$, the (vertex-)weighted middle digraph $M(D)$ of $D$ is the weighted digraph obtained from $D$ by replacing each arc $e=(u,v)$ with a directed path $u\rightarrow e\rightarrow v$, and adding a new arc from $e$ to $f$ if the arcs $e$ and $f$ in $D$ satisfy $h(e)=t(f)$. For each vertex of $M(D)$, if $v\in V(D)$, its weight is the weight of vertex $v$ of original digraph $D$, otherwise, its weight is the weight of corresponding arc of $D$.
We define two other sets as follows:
\begin{small}$$EE(D)=\{(e,f)\in A(D) \times A(D)\mid h(e)=t(f)\},$$
$$EV(D)=\{(v,e)\in V(D)\times A(D)\mid t(e)=v\}\cup\{(e,v)\in A(D)\times V(D)\mid h(e)=v\}.$$
\end{small}

Then we can write the weighted middle digraph $M(D)$ of $D$ as follows:
$$M(D)=(V(D)\cup A(D),EE(D)\cup EV(D),\chi).$$

The middle graph and the middle digraph have extensively been studied. See, for example, the enumerative problem of spanning trees of the middle graph of the semiregular bipartite graph \cite{ISZ}, the regular graph \cite{JS}, the general graph \cite{WY} and the weighted graph \cite{JC}. Zamfirescu \cite{CZ} obtained the local and global characterizations of middle digraphs; Liu, Zhang and Meng \cite{LZM} studied super-arc-connected and super-connected middle digraphs and the spectra of middle digraphs; In 2016, Zamfirescu \cite{CMDZ} proved that if a digraph $D$ contains no loops, the intersection number of $M(D)$ is equal to the number of vertices of $D$ that are not sources, added to the number of vertices of $D$ that are not sinks.

The main purpose of this paper is to consider the relation between the vertex weighted complexity of $M(D)$ and the edge weighted complexity of a digraph $D$. Particularly, we derive an enumerative formula on the number of spanning trees of the middle digraph $M(D)$ of a digraph $D$.

\section{Main results}
Let $f(A,\lambda)=\det(\lambda I_n-A)$ be the characteristic polynomial of a matrix $A$ of order $n$, where $I_n$ is the unit matrix of order $n$. Firstly, we express the characteristic polynomial of $\Delta^{vertex}(M(D))$ in terms of $\Delta^{edge}(D)$.
\begin{lem}\label{lem1.2}
Suppose $D$ is a weighted digraph and $M(D)$ is the middle digraph of $D$. 
Then the characteristic polynomial of $\Delta^{vertex}(M(D))$ can be expressed by
\begin{eqnarray}f(\Delta^{vertex}(M(D)),\lambda)=f(\Delta^{edge}(D),\lambda)\displaystyle\prod_{i = 1}^{n}(\lambda-\chi _{i}-d_{i})^{r_{i}},\label{eq4}\end{eqnarray}
where $n=|V(D)|$, $r_{i}=d_{-}(v_{i})$, $d_{i}=\displaystyle\sum\limits_{t(e)=v_{i}}\chi _{e}$ $(1\leq i\leq n)$.
\end{lem}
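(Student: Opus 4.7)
The plan is to partition $\Delta^{vertex}(M(D))$ into a $2\times 2$ block structure along $V(M(D))=V(D)\cup A(D)$ and then pry the target factor $\prod_i(\lambda-\chi_i-d_i)^{r_i}$ away from $\det(\lambda I_n-\Delta^{edge}(D))$ via the Schur complement together with the Sylvester determinant identity. Let $P,Q\in\{0,1\}^{n\times m}$ denote the head- and tail-incidence matrices of $D$ (so $P_{i,k}=[h(e_k)=v_i]$ and $Q_{i,k}=[t(e_k)=v_i]$), and write $W,D_V,D_d$ for the diagonal matrices with entries $\chi_{e_k}$, $\chi_i$, $d_i$ respectively. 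Reading the definition of $M(D)$ off directly yields
\[
\Delta^{vertex}(M(D))=\begin{pmatrix} D_d & -QW \\ -P^T D_V & F-P^T QW \end{pmatrix},
\]
where $F$ is diagonal with $F_{k,k}=\chi_{h(e_k)}+d_{h(e_k)}$. The bridge back to $D$ is the identity $QWP^T=D_d-\Delta^{edge}(D)$ (since $(QWP^T)_{i,j}$ is just $\chi_{ij}$), from which a short computation yields
\[
QW\Lambda^{-1}P^T=(D_d-\Delta^{edge}(D))\,\Lambda_V^{-1},
\]
where $\Lambda:=\lambda I_m-F$ and $\Lambda_V:=\lambda I_n-D_V-D_d$ are both diagonal.

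Given this identity, the Sylvester/Weinstein--Aronszajn identity applied to $\lambda I_m-E=\Lambda+P^TQW$ gives $\det(\lambda I_m-E)=\det(\Lambda)\cdot\det(K)/\det(\Lambda_V)$, where $K:=\lambda I_n-D_V-\Delta^{edge}(D)$, using $I_n+QW\Lambda^{-1}P^T=K\Lambda_V^{-1}$. Next I would apply the Woodbury identity to $(\lambda I_m-E)^{-1}$ to simplify $QW(\lambda I_m-E)^{-1}P^T$ to $(D_d-\Delta^{edge}(D))K^{-1}$, so that the Schur complement (taken with respect to the $(A,A)$-block) becomes
\[
\mathrm{Sch}=(\lambda I_n-D_d)-(D_d-\Delta^{edge}(D))K^{-1}D_V.
\]
The algebraic identities $\lambda I_n-D_d=\Lambda_V+D_V$ and $I_n-(D_d-\Delta^{edge}(D))K^{-1}=\Lambda_V K^{-1}$ then collapse this to $\mathrm{Sch}=\Lambda_V K^{-1}(\lambda I_n-\Delta^{edge}(D))$, so that $\det(\mathrm{Sch})=\det(\Lambda_V)\det(K)^{-1}\det(\lambda I_n-\Delta^{edge}(D))$.

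Multiplying $\det(\lambda I_m-E)\cdot\det(\mathrm{Sch})$ cancels both $\det(K)$ and $\det(\Lambda_V)$ and leaves $\det(\Lambda)\cdot\det(\lambda I_n-\Delta^{edge}(D))=\prod_i(\lambda-\chi_i-d_i)^{r_i}\det(\lambda I_n-\Delta^{edge}(D))$, which is the desired identity. The main obstacle is the algebraic bookkeeping showing the collapse $\mathrm{Sch}=\Lambda_V K^{-1}(\lambda I_n-\Delta^{edge}(D))$; the rest is a routine combination of Schur, Sylvester and Woodbury. A minor technical caveat is that the inverses $\Lambda^{-1},\Lambda_V^{-1},K^{-1}$ only make sense for generic $\lambda$, but both sides of the target are polynomials in $\lambda$, so the identity extends to all $\lambda$ by Zariski density; in particular, a sink $v_i$ with $r_i=0$ causes no trouble, since the $i$th column of $K$ is then $(\lambda-\chi_i-d_i)$ times the standard basis vector $e_i$, so $(\lambda-\chi_i-d_i)$ automatically divides $\det K$ and the cancellations remain valid at the level of polynomials.
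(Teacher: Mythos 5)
Your argument is correct, and it is built from the same ingredients as the paper's proof: the identical $2\times 2$ block form of $\Delta^{vertex}(M(D))$ over $V(D)\cup A(D)$ (your $D_d$, $QW$, $P^TD_V$, $F-P^TQW$ are exactly the paper's $F$, $M$, $LQ$, $B+F^{\iota}-W^{\iota}$), the incidence factorizations identifying $QWP^T$ with the weighted adjacency matrix of $D$ and $P^TQW$ with that of $L(D)$, a Schur complement, and the Sylvester identity $\det(I_m-XY)=\det(I_n-YX)$. The route differs only in organization: the paper eliminates the vertex block first and then evaluates the resulting $n\times n$ determinant entry by entry, pulling the scalar factors $(\lambda-\chi_i-d_i)^{-1}$ and $\chi_i(\lambda-d_i)^{-1}-1$ out of columns and rows to expose $\det(\lambda I_n-F+W)$; you eliminate the arc block first and use Woodbury so that the whole computation stays at the level of matrix identities, with $\det(K)$ and $\det(\Lambda_V)$ cancelling formally at the end. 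Your version buys a tidier, bookkeeping-free cancellation and makes explicit the genericity/polynomiality argument that justifies inverting $\Lambda$, $\Lambda_V$ and $K$ (the paper inverts $\lambda I_n-F$ and $\lambda I_m-B-F^{\iota}$ without comment); the paper's version avoids Woodbury and displays the emergence of $\Delta^{edge}(D)$ concretely. One small nit: a vertex with $r_i=d_-(v_i)=0$ is a source rather than a sink, though the mathematical point you make about that case is sound and, as you note, ultimately unnecessary since the auxiliary determinants cancel identically before any specialization of $\lambda$.
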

\begin{proof}
Let $D$ be a digraph with $n$ vertices and $m$ arcs, and a weight function $\chi:V(D)\cup A(D)\rightarrow(0,\infty)$.
Let $W=(a_{vw})_{n\times n}, F=(f_{vw})_{n\times n}, W^{\iota}=(c_{ef})_{m\times m}, F^{\iota}=(h_{ef})_{m\times m}, M=(M_{ve})_{n\times m}, L=(L_{ev})_{m\times n}, Q=(Q_{vw})_{n\times n}, B=(B_{ef})_{m\times m}$ which are defined as follows.
\begin{align*}
&W:a_{vw}=
\begin{cases}
\chi_{e} &  e=(v,w)\in A(D) \cr  0 &e=(v,w)\not \in A(D).\end{cases}
\hspace{0.4in}
&&F:f_{vw}=
\begin{cases}
 \sum\limits _{t(e)=v}\chi_{e}&  v=w\cr  0 &v\neq w.\end{cases}\\
&W^{\iota}:c_{ef}=
\begin{cases}
\chi_{f} &  h(e)=t(f)\cr  0 &h(e)\neq t(f).\end{cases}
\hspace{0.9in}
&&F^{\iota}:h_{ef}=
\begin{cases}
 \sum\limits _{h(e)=t(g)}\chi_{g}&  e=f\cr  0 &e\neq f.\end{cases}\\
&M:M_{ve}:=
\begin{cases}
 \chi_{e}&  t(e)=v\cr  0 &t(e)\neq v.\end{cases}
 \hspace{0.7in}
 &&L:L_{ev}:=
 \begin{cases}
 1&  h(e)=v\cr  0 &h(e)\neq v.\end{cases}\\
&Q: Q_{vw}:=
\begin{cases}
 \chi_{v}&  v=w\cr  0 &v\neq w.\end{cases}
 \hspace{0.9in}
 &&B: B_{ef}:=
\begin{cases}
 \chi_{h(e)}&  e=f\cr  0 &e\neq f.\end{cases}
\end{align*}

It is not difficult to prove that $\Delta^{edge}(D)=F-W$, $W^{\iota}=LM$, $W=ML$, and
\begin{center}
$\Delta ^{vertex}(M(D))=\left(
                          \begin{array}{cc}
                            F & -M \\
                            -LQ & B+F^{\iota}-W^{\iota} \\
                          \end{array}
                        \right).
$
\end{center}
Thus,\begin{eqnarray*}
& &f(\Delta^{vertex}(M(D)),\lambda)\\
&=& \det(\lambda I_{m+n}-\Delta ^{vertex}(M(D)))\\
&=&\det\left(
              \begin{array}{cc}
                \lambda I_{n}-F & M \\
                LQ & \lambda I_{m}-B-F^{\iota}+W^{\iota} \\
              \end{array}
            \right)\\\\
&=&\det\left(
            \begin{array}{cc}
               \lambda I_{n}-F & M \\
              0&  (\lambda I_{m}-B-F^{\iota}+W^{\iota})-LQ(\lambda I_{n}-F)^{-1}M \\
            \end{array}
          \right)\\\\
&=&\det(\lambda I_{n}-F) \det(\lambda I_{m}-B-F^{\iota}-L[Q(\lambda I_{n}-F)^{-1}-I_{n}]M)\\
&=&\det(\lambda I_{n}-F) \det(I_{m}-L[Q(\lambda I_{n}-F)^{-1}-I_{n}]\times\\
& &M(\lambda I_{m}-B-F^{\iota})^{-1})\det(\lambda I_{m}-B-F^{\iota}).
\end{eqnarray*}

If $X$ is an $m\times n$ matrix and $Y$ is an $n\times m$ matrix, then
$$\det(I_{m}-XY)=\det(I_{n}-YX).$$

Thus,\begin{eqnarray*}
&&\det(\lambda I_{m+n}-\Delta ^{vertex}(M(D)))\\
&=& \det(\lambda I_{n}-F)\det(\lambda I_{m}-B-F^{\iota})\times\\
& &\det(I_{n}-[Q(\lambda I_{n}-F)^{-1}-I_{n}]M(\lambda I_{m}-B-F^{\iota})^{-1}L).
\end{eqnarray*}

Obviously,  $\det(\lambda I_{m}-B-F^{\iota})=\displaystyle\prod_{i = 1}^{n}(\lambda-\chi _{i}-d_{i})^{r_{i}}$.

By a suitable labelling vertices of $L(D)$, we have
%

$$(\lambda I_{m}-B-F^{\iota})^{-1}\\
=\left(
                                    \begin{array}{cccc}
                                    \begin{smallmatrix}
                                      (\lambda-\chi_{1}-d_{1})^{-1} I_{r_{1}}&   &   &   \\
                                       &  (\lambda-\chi_{2}-d_{2})^{-1} I_{r_{2}}&  &  \\
                                       &  &\ddots  &  \\
                                       &  &  &  (\lambda-\chi_{n}-d_{n})^{-1} I_{r_{n}}\\
\end{smallmatrix}
                                    \end{array}
                                  \right).
$$

For each arc $e=(v,w)\in A(D)$,

$$(M(\lambda I_{m}-B-F^{\iota})^{-1}L)_{vw}=\chi_{vw}\cdot(\lambda-\chi_{w}-d_{w})^{-1},$$
and
%
%
$$Q(\lambda I_{n}-F)^{-1}-I_{n}
 =\left(
                                 \begin{array}{cccc}
  \begin{smallmatrix}
                                   \chi_{1}(\lambda-d_{1})^{-1}-1 &  &  &  \\
                                    & \chi_{2}(\lambda-d_{2})^{-1}-1&  &  \\
                                    &  & \ddots   &  \\
                                    &  &  & \chi_{n}(\lambda-d_{n})^{-1}-1 \\
  \end{smallmatrix}
                                 \end{array}
                               \right).
$$
Thus,
\begin{eqnarray*}
& &\det(I_{n}-[Q(\lambda I_{n}-F)^{-1}-I_{n}]M(\lambda I_{m}-B-F^{\iota})^{-1}L)\\
&&\\
&=&\det\left(
       \begin{array}{cccc}
         1 & \frac{-[\chi_{1}(\lambda-d_{1})^{-1}-1]\cdot\chi_{12}}{\lambda-\chi_{2}-d_{2}} &\cdots &\frac{-[\chi_{1}(\lambda-d_{1})^{-1}-1]\cdot\chi_{1n}}{\lambda-\chi_{n}-d_{n}} \\
         \frac{-[\chi_{2}(\lambda-d_{2})^{-1}-1]\cdot\chi_{21}}{\lambda-\chi_{1}-d_{1}}& 1 & \cdots  &  \frac{-[\chi_{2}(\lambda-d_{2})^{-1}-1]\cdot\chi_{2n}}{\lambda-\chi_{n}-d_{n}}  \\
          \vdots &  \vdots  & \ddots  & \vdots   \\
          \frac{-[\chi_{n}(\lambda-d_{n})^{-1}-1]\cdot\chi_{n1}}{\lambda-\chi_{1}-d_{1}} &  \frac{-[\chi_{n}(\lambda-d_{n})^{-1}-1]\cdot\chi_{n2}}{\lambda-\chi_{2}-d_{2}} & \cdots &  1\\
       \end{array}
     \right)\\
&&\\
&=&\displaystyle\prod_{i = 1}^{n}(\lambda-\chi _{i}-d_{i})^{-1}[\chi_{i}(\lambda-d_{i})^{-1}-1](-1)^{n}\det(\lambda I_{n}-F+W).
\end{eqnarray*}
Therefore,
$$\det(\lambda I_{m+n}-\Delta ^{vertex}(M(D))=\displaystyle\prod_{i = 1}^{n}(\lambda-\chi _{i}-d_{i})^{r_{i}}\det(\lambda I_{n}-\Delta^{edge}(D)).$$
\end{proof}
By Lemma \ref{lem1.2}, we can obtain the relation between the complexities of $M(D)$ and $D$ as follows.
\begin{thm}\label{thm1.1}
Suppose $D$ is a weighted digraph and $M(D)$ is the middle digraph of $D$. Then,
\begin{eqnarray}\kappa^{vertex}(M(D),\chi)=\kappa^{edge}(D,\chi)\displaystyle\prod_{i = 1}^{n}(\chi _{i}+d_{i})^{r_{i}},\label{eq3}\end{eqnarray}
where $n=|V(D)|$, $m=|A(D)|$, $r_{i}=d_{-}(v_{i})$, $d_{i}=\displaystyle\sum\limits_{t(e)=v_{i}}\chi _{e}$ $(1\leq i\leq n)$.
\end{thm}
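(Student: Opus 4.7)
The plan is to deduce Theorem \ref{thm1.1} from Lemma \ref{lem1.2} by comparing a single coefficient on each side of the characteristic polynomial identity \eqref{eq4}, together with the generalized matrix tree theorem (Theorem \ref{thm1.3}).

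First I would observe that both $\Delta^{edge}(D)$ and $\Delta^{vertex}(M(D))$ have zero row sums by construction, so $\lambda=0$ is a root of both $f(\Delta^{edge}(D),\lambda)$ and $f(\Delta^{vertex}(M(D)),\lambda)$. Hence each characteristic polynomial is divisible by $\lambda$, and the lowest‑order nontrivial information sits in the coefficient of $\lambda^{1}$. Recall that for any square matrix $A$ of order $N$, the coefficient of $\lambda$ in $\det(\lambda I_N-A)$ equals $(-1)^{N-1}$ times the sum of the $(N-1)\times(N-1)$ principal minors of $A$. Combining this with Theorem \ref{thm1.3} gives
\begin{align*}
[\lambda^{1}]\,f(\Delta^{edge}(D),\lambda)&=(-1)^{n-1}\sum_{v\in V(D)}\det\bigl((\Delta^{edge}(D))_{vv}\bigr)=(-1)^{n-1}\kappa^{edge}(D,\chi),\\
[\lambda^{1}]\,f(\Delta^{vertex}(M(D)),\lambda)&=(-1)^{m+n-1}\kappa^{vertex}(M(D),\chi),
\end{align*}
since $M(D)$ has $m+n$ vertices.

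Next I would extract the coefficient of $\lambda$ on the right‑hand side of \eqref{eq4}. Writing $f(\Delta^{edge}(D),\lambda)=\lambda\,g(\lambda)$ and $p(\lambda)=\prod_{i=1}^{n}(\lambda-\chi_{i}-d_{i})^{r_{i}}$, the identity becomes $f(\Delta^{vertex}(M(D)),\lambda)=\lambda\,g(\lambda)p(\lambda)$, so the coefficient of $\lambda$ equals $g(0)\,p(0)$. Because the weight function takes values in $(0,\infty)$, each factor $\chi_{i}+d_{i}$ is strictly positive, so
\[
p(0)=\prod_{i=1}^{n}(-\chi_{i}-d_{i})^{r_{i}}=(-1)^{\sum_{i}r_{i}}\prod_{i=1}^{n}(\chi_{i}+d_{i})^{r_{i}}=(-1)^{m}\prod_{i=1}^{n}(\chi_{i}+d_{i})^{r_{i}},
\]
using $\sum_{i}r_{i}=\sum_{i}d_{-}(v_{i})=m$. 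Also $g(0)=[\lambda^{1}]\,f(\Delta^{edge}(D),\lambda)=(-1)^{n-1}\kappa^{edge}(D,\chi)$.

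Finally I would equate the two expressions for $[\lambda^{1}]\,f(\Delta^{vertex}(M(D)),\lambda)$:
\[
(-1)^{m+n-1}\kappa^{vertex}(M(D),\chi)=(-1)^{n-1}\kappa^{edge}(D,\chi)\cdot(-1)^{m}\prod_{i=1}^{n}(\chi_{i}+d_{i})^{r_{i}},
\]
and cancel the common sign $(-1)^{m+n-1}$ to obtain \eqref{eq3}. I do not anticipate any serious obstacle; the one step that needs care is the bookkeeping of signs, in particular verifying that the two minus signs $(-1)^{n-1}$ and $(-1)^{m}$ on the right combine to exactly match $(-1)^{m+n-1}$ on the left, and that the positivity of the weights prevents $p(0)$ from vanishing so that the coefficient of $\lambda$ really does factor as $g(0)p(0)$.
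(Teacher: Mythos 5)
Your proposal is correct and follows essentially the same route as the paper: both extract the coefficient of $\lambda$ (equivalently, evaluate $f'$ at $0$) from the identity of Lemma \ref{lem1.2}, identify it with $(-1)^{N-1}$ times the trace of the adjugate via Theorem \ref{thm1.3}, and match signs using $\sum_i r_i=m$. The only cosmetic difference is that you make explicit the singularity of the Laplacians (zero row sums) to justify the factorization $f=\lambda g(\lambda)$, which the paper uses implicitly when differentiating the product.
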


\begin{proof}
For the {\it characteristic polynomial} $f(\lambda)=\mid\lambda I_{n}-A\mid=\lambda^{n}+a_{n-1}\lambda^{n-1}+......+a_{1}\lambda+a_{0}$, the coefficient of $\lambda$ is $(-1)^{n-1}\times tr(adj A)$.

Therefore, by Theorem \ref{thm1.3}, we have
$$f^{'}(\Delta^{vertex}(M(D)),0)=(-1)^{m+n-1}\kappa^{vertex}(M(D),\chi),$$
$$f^{'}(\Delta^{edge}(D),0)=(-1)^{n-1}\kappa^{edge}(D,\chi).$$

By Lemma 4 (i.e., Eq. (3)),
$$f^{'}(\Delta^{vertex}(M(D)),0)=f^{'}(\Delta^{edge}(D),0)\displaystyle\prod_{i = 1}^{n}(\chi _{i}+d_{i})^{r_{i}}(-1)^{m}.$$

Therefore,
$$\kappa^{vertex}(M(D),\chi)=\kappa^{edge}(D,\chi)\displaystyle\prod_{i = 1}^{n}(\chi _{i}+d_{i})^{r_{i}}.$$
\end{proof}

When the weight of each arc and each vertex of $D$ is 1, the enumerative formula of spanning trees of the middle digraph of a digraph can be obtained as follows.
\begin{cor}
Suppose $D$ is a  digraph and $M(D)$ is the middle digraph of $D$. Then,
\begin{eqnarray}t(M(D))=t(D)\displaystyle\prod_{i = 1}^{n}(1+d_{i})^{r_{i}}.\label{eq5}\end{eqnarray}
where $t(M(D))$ is the number of the spanning trees of $M(D)$, $d_{i}=d_{+}(v_{i})$, $r_{i}=d_{-}(v_{i})$.
\end{cor}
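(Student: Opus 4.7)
The plan is to derive this directly from Theorem \ref{thm1.1} by specializing the weight function to $\chi \equiv 1$ on $V(D) \cup A(D)$.

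First I would check how the two complexity polynomials collapse under this specialization. Since $\chi_e = 1$ for every arc $e$, each product $\prod_{e\in A(T)} \chi_e$ in the definition of $\kappa^{edge}(D,\chi)$ equals $1$, so the edge weighted complexity reduces to the number of spanning trees, $\kappa^{edge}(D,\chi) = |\mathcal{T}(D)| = t(D)$. Analogously, every vertex of $M(D)$ inherits weight $1$ (vertices from $V(D)$ because $\chi_v \equiv 1$, and vertices from $A(D)$ because $\chi_e \equiv 1$), so each product $\prod_{e\in A(T)} \chi_{h(e)}$ equals $1$, giving $\kappa^{vertex}(M(D),\chi) = |\mathcal{T}(M(D))| = t(M(D))$.

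Next I would reconcile the notation $d_i$ appearing in Theorem \ref{thm1.1} (defined there as $\sum_{t(e)=v_i}\chi_e$) with the out-degree $d_+(v_i)$ used in the corollary: summing the constant $1$ over the arcs leaving $v_i$ yields $|\{e : t(e) = v_i\}| = d_+(v_i)$. Hence the factor $(\chi_i + d_i)^{r_i}$ in identity (\ref{eq3}) becomes $(1 + d_+(v_i))^{r_i}$, which is exactly the factor $(1+d_i)^{r_i}$ appearing in the statement of the corollary.

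Substituting all of this into (\ref{eq3}) gives $t(M(D)) = t(D) \prod_{i=1}^{n} (1+d_i)^{r_i}$, as desired. Because the argument is a direct substitution into an already-proved identity, there is no genuine obstacle; the only point worth flagging is the coincidence of notation between the weighted sum $d_i$ in Theorem \ref{thm1.1} and the purely combinatorial out-degree $d_i$ in the corollary, which becomes a tautology the instant $\chi$ is set to the constant $1$.
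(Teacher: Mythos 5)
Your proof is correct and is exactly the derivation the paper intends: the corollary is stated as an immediate specialization of Theorem \ref{thm1.1} with $\chi\equiv 1$, under which $\kappa^{edge}(D,\chi)=t(D)$, $\kappa^{vertex}(M(D),\chi)=t(M(D))$, and $d_i=\sum_{t(e)=v_i}\chi_e$ becomes the out-degree $d_+(v_i)$. Nothing further is needed.
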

If the edge $e_{*}=(w_{*},v_{*})$ is fixed, then the relation between the {\it vertex weighted complexity} of the middle digraph $M(D)$ with root $e_{*}$ and the {\it edge weighted complexity} of $D$ with root $w_{*}$ can be obtained as follows:
\begin{proposition}
Suppose $D$ is a weighted digraph and $M(D)$ is the middle digraph of $D$. For any fixed arc $e_{*}=(w_{*},v_{*})$ of $D$, then
\begin{eqnarray}\kappa^{vertex}(M(D),e_{*},\chi)=\chi_{e_{*}}\kappa^{edge}(D,w_{*},\chi)(\chi _{v_{*}}+d_{v_{*}})^{r_{v_{*}}-1}\displaystyle\prod_{v\neq v_{*}}(\chi _{v}+d_{v})^{r_{v}}.\label{eq6}\end{eqnarray}
\end{proposition}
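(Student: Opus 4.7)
The plan is to apply the generalized matrix tree theorem (Theorem~\ref{thm1.3}) to identify $\kappa^{vertex}(M(D),e_{*},\chi)$ with the principal minor $\det\bigl((\Delta^{vertex}(M(D)))_{e_{*}e_{*}}\bigr)$, and then adapt the block Schur complement calculation from the proof of Lemma~\ref{lem1.2}, carefully tracking the removal of the row and column indexed by $e_{*}$ (both of which lie in the arc block of the decomposition used there).

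First I would start from the block form
\[
\Delta^{vertex}(M(D))=\begin{pmatrix} F & -M\\ -LQ & B+F^{\iota}-W^{\iota}\end{pmatrix}
\]
and delete the row and column of $e_{*}$; write $M'$ and $L'$ for the corresponding submatrices (column removed from $M$, row removed from $L$) and use a prime for the analogous deletion in the bottom-right block. Assuming each $d_i>0$ so that $F$ is invertible (the general case follows by polynomial continuity in the weights), a Schur complement with respect to $F$ gives
\[
\det\bigl((\Delta^{vertex}(M(D)))_{e_{*}e_{*}}\bigr)=\det(F)\,\det\bigl((B+F^{\iota}-W^{\iota})'-L'QF^{-1}M'\bigr).
\]
An entrywise check factors the second determinant as $\prod_{e\neq e_{*}}(\chi_{h(e)}+d_{h(e)})\cdot\det(I_{m-1}-L'F^{-1}M')$, and grouping arcs by head vertex rewrites this product as $(\chi_{v_{*}}+d_{v_{*}})^{r_{v_{*}}-1}\prod_{v\neq v_{*}}(\chi_{v}+d_{v})^{r_{v}}$. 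Sylvester's identity converts $\det(I_{m-1}-L'F^{-1}M')$ into $\det(I_{n}-F^{-1}M'L')$, and a direct computation yields $M'L'=W-\chi_{e_{*}}\mathbf{1}_{w_{*}}\mathbf{1}_{v_{*}}^{\top}$ (where $\mathbf{1}_u$ denotes the indicator vector of vertex $u$); this equals $\det(\Delta^{edge}(D)+\chi_{e_{*}}\mathbf{1}_{w_{*}}\mathbf{1}_{v_{*}}^{\top})/\det(F)$, cancelling the outstanding $\det(F)$.

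The final step, and the main obstacle, is to evaluate the rank-one perturbation $\det\bigl(\Delta^{edge}(D)+\chi_{e_{*}}\mathbf{1}_{w_{*}}\mathbf{1}_{v_{*}}^{\top}\bigr)$. By the matrix determinant lemma, together with $\det\Delta^{edge}(D)=0$ (which holds because $\Delta^{edge}(D)\mathbf{1}=0$), this reduces to $\chi_{e_{*}}\bigl[adj(\Delta^{edge}(D))\bigr]_{v_{*}w_{*}}$. The subtle point is that this off-diagonal cofactor coincides with the diagonal cofactor at $w_{*}$: from $\Delta^{edge}(D)\cdot adj(\Delta^{edge}(D))=0$ and $\ker(\Delta^{edge}(D))=\operatorname{span}(\mathbf{1})$, each column of $adj(\Delta^{edge}(D))$ is a constant multiple of $\mathbf{1}$, so column $w_{*}$ has constant value $[adj(\Delta^{edge}(D))]_{w_{*}w_{*}}=\kappa^{edge}(D,w_{*},\chi)$ by Theorem~\ref{thm1.3}. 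Assembling the three contributions then yields the claimed identity.
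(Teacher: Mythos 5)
Your proposal is correct and follows essentially the same route as the paper: the generalized matrix tree theorem identifies $\kappa^{vertex}(M(D),e_{*},\chi)$ with the principal minor at $e_{*}$, then the same block form of $\Delta^{vertex}(M(D))$, a Schur complement with respect to $F$, and Sylvester's identity reduce everything to the $n\times n$ rank-one perturbation $\det\bigl(\Delta^{edge}(D)+\chi_{e_{*}}\mathbf{1}_{w_{*}}\mathbf{1}_{v_{*}}^{\top}\bigr)$, exactly as in the paper (and your entrywise cancellation of the $B$ and $Q$ contributions, giving $\det(I_{m-1}-L'F^{-1}M')$ on the nose, checks out). The only divergence is the final evaluation, where the paper uses multilinearity in row $w_{*}$ to split off the zero-row-sum Laplacian of $D-e_{*}$, while you invoke the matrix determinant lemma plus the constancy of the columns of $\operatorname{adj}(\Delta^{edge}(D))$; both correctly yield $\chi_{e_{*}}\kappa^{edge}(D,w_{*},\chi)$.
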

\begin{proof}
Let $W=(a_{vw})_{n\times n}, F=(f_{vw})_{n\times n}, W^{\iota}=(c_{ef})_{m\times m}, F^{\iota}=(h_{ef})_{m\times m}, M=(M_{ve})_{n\times m}, L=(L_{ev})_{m\times n}, Q=(Q_{vw})_{n\times n}, B=(B_{ef})_{m\times m}$ be the eight matrices defined in the proof of Lemma 4.
Suppose that $e_{*}=(w_{*},v_{*})$ is a fixed arc in $D$.
Let $M_{0}, L_0, B_0, W_0^{\iota}, F_0^{\iota}$ be the matrices obtained from the matrix $M, L, B, W^{\iota}$ and $F_{\iota}$ by deleting the column $e_{*}$ of $M$, the row $e_{*}$ of $L$, the row $e_{*}$ and the column $e_{*}$ of $B$, $W^{\iota}$, $F^{\iota}$, respectively.
 Then, by
Theorem \ref{thm1.3} , we have
\begin{eqnarray*}
& &\kappa^{vertex}(M(D),e_{*},\chi)\\
  &=& \det\left(
                                            \begin{array}{cc}
                                              F & -M_{0} \\
                                              -L_{0}Q & B_{0}+D_{0}^{\iota}-W_{0}^{\iota} \\
                                            \end{array}
                                          \right)
  \\
   &=&\det\left(
                                            \begin{array}{cc}
                                              F & -M_{0} \\
                                              0 & B_{0}+F_{0}^{\iota}-W_{0}^{\iota}-L_{0}QF^{-1}M_{0} \\
                                            \end{array}
                                          \right) \\
  &=&\det(F)\det(B_{0}+F_{0}^{\iota}-W_{0}^{\iota}-L_{0}QF^{-1}M_{0})
   \\
   &=&\det(F)\det(I_{n}-(I_{n}+QF^{-1})M_{0}(B_{0}+F_{0}^{\iota})^{-1}L_{0})\det(B_{0}+F_{0}^{\iota}).
\end{eqnarray*}
But, we have $\det(B_{0}+F_{0}^{\iota})=(\chi _{v_{*}}+d_{v_{*}})^{r_{v_{*}}-1}\displaystyle\prod_{v\neq v_{*}}(\chi _{v}+d_{v})^{r_{v}}$.

Furthermore, we have
$$(B_{0}+F_{0}^{\iota})^{-1}=\left(
                              \begin{array}{ccccc}
                              \begin{smallmatrix}
                                (\chi_{1}+d_{1})^{-1}\cdot I_{r_{1}} &  &  &  &  \\
                                 & \ddots&  &  &  \\
                                        &  & (\chi_{v_{*}}+d_{v_{*}})^{-1}\cdot I_{r_{v_{*}-1}}  &  &  \\
                                 &  &  & \ddots &  \\
                                &  &  &  & (\chi_{n}+d_{n})^{-1}\cdot I_{r_{n}}  \\
                                \end{smallmatrix}
                              \end{array}
                            \right)
.$$
Thus, for an arc $e=(v, w)\in A(D)$,
$$(M_{0}(B_{0}+F_{0}^{\iota})^{-1}L_{0})_{wv}=\chi_{vw}\cdot(\chi_{w}+d_{w})^{-1}.$$
Therefore, it follows that
\begin{eqnarray*}
&&\det(I_{n}-(I_{n}+QF^{-1})M_{0}(B_{0}+F_{0}^{\iota})^{-1}L_{0})\\
&&\\
&=&\displaystyle\prod_{i = 1}^{n}(1+\chi_{i}d_{i}^{-1})\frac{1}{(\chi_{i}+d_{i})}\det\left(
                                                           \begin{array}{cccccc}
                                                           \begin{smallmatrix}
                                                              d_{1}& -\chi_{12} & \cdots&  -\chi_{1v_{*}}& \cdots & -\chi_{1n} \\
                                                               -\chi_{21} & d_{2} & \cdots &-\chi_{2v_{*}}& \cdots & -\chi_{2n} \\
                                                             \vdots& \vdots&  & \vdots &  & \vdots\\
                                                               -\chi_{w_{*}1}&  -\chi_{w_{*}2} & \cdots& 0 & \cdots &  -\chi_{w_{*}n}\\
                                                             \vdots & \vdots &  & \vdots &  & \vdots \\
                                                             -\chi_{n1} & -\chi_{n2} & \cdots & -\chi_{nv_{*}} & \cdots & d_{n} \\
                                                             \end{smallmatrix}
                                                           \end{array}
                                                         \right).
\end{eqnarray*}
Let $g_{w_{*}}=d_{_{w_{*}}}-\chi_{e_{*}}$, then
\begin{eqnarray*}
&=&\displaystyle\prod_{i = 1}^{n}(1+\chi_{i}d_{i}^{-1})\frac{1}{(\chi_{i}+d_{i})}\det\left(
         \begin{array}{cccccccc}
         \begin{smallmatrix}
           d_{1} & -\chi_{12} & \cdots& -\chi_{1w_{*}} & \cdots& -\chi_{1v_{*}} & \cdots & -\chi_{1n}\\
            -\chi_{21}&d_{2}&\cdots &-\chi_{2w_{*}}& \cdots & -\chi_{2v_{*}} & \cdots & -\chi_{2n} \\
           \vdots & \vdots &  & \vdots &  & \vdots & & \vdots \\
          -\chi_{w_{*}1} &  -\chi_{w_{*}2}& \cdots&g_{w_{*}}& \cdots & 0 &\cdots & -\chi_{nw_{*}} \\
          \vdots &\vdots&  & \vdots &  &  \vdots &  & \vdots \\
           -\chi_{n1} & -\chi_{n2} &\cdots & -\chi_{nw_{*}} & \cdots & -\chi_{nv_{*}} &\cdots& d_{n} \\
           \end{smallmatrix}
         \end{array}
       \right)\\
       &&\\
& &+\displaystyle\prod_{i = 1}^{n}(1+\chi_{i}d_{i}^{-1})\frac{1}{(\chi_{i}+d_{i})}\det\left(
         \begin{array}{cccccccc}
         \begin{smallmatrix}
           d_{1} & -\chi_{12} & \cdots& -\chi_{1w_{*}} & \cdots& -\chi_{1v_{*}} & \cdots & -\chi_{1n}\\
            -\chi_{12}&d_{2}&\cdots &-\chi_{2w_{*}}& \cdots & -\chi_{2v_{*}} & \cdots & -\chi_{2n} \\
           \vdots & \vdots &  & \vdots &  & \vdots & & \vdots \\
          0 &  0 & \cdots &\chi_{e_{*}}& \cdots & 0 &\cdots & 0 \\
          \vdots &\vdots&  & \vdots &  &  \vdots &  & \vdots \\
           -\chi_{n1} & -\chi_{2n} &\cdots & -\chi_{nw_{*}} & \cdots & -\chi_{nv_{*}} &\cdots& d_{n} \\
           \end{smallmatrix}
         \end{array}
       \right)\\
       &&\\
&=&\displaystyle\prod_{i = 1}^{n}(1+\chi_{i}d_{i}^{-1})\frac{1}{(\chi_{i}+d_{i})}\chi_{e_{*}}\kappa^{edge}(D,w_{*},\chi).
\end{eqnarray*}
Therefore,
\begin{eqnarray*}
& &\kappa^{vertex}(M(D),e_{*},\chi)\\
  &=& \det(F)\det(B_{0}+F_{0}^{\iota})\displaystyle\prod_{i = 1}^{n}(1+\chi_{i}d_{i}^{-1})\frac{1}{(\chi_{i}+d_{i})}\chi_{e_{*}}\kappa^{edge}(D,w_{*},\chi) \\
  &=& \displaystyle\prod_{i = 1}^{n}d_{i} (1+\chi_{i}d_{i}^{-1})\frac{1}{(\chi_{i}+d_{i})}(\chi _{v_{*}}+d_{v_{*}})^{r_{v_{*}}-1}\displaystyle\prod_{v\neq v_{*}}(\chi _{v}+d_{v})^{r_{v}}\chi_{e_{*}}\kappa^{edge}(D,w_{*},\chi)\\
  &=&\chi_{e_{*}}\kappa^{edge}(D,w_{*},\chi)(\chi _{v_{*}}+d_{v_{*}})^{r_{v_{*}}-1}\displaystyle\prod_{v\neq v_{*}}(\chi _{v}+d_{v})^{r_{v}}.
\end{eqnarray*}
\end{proof}

\section*{Acknowledgements}
\noindent

This work is supported by NSFC (Nos. 11671336, 12071180) and the Fundamental Research
Funds for the Central Universities  (No. 20720190062).

\section*{References}

\bibliographystyle{model1b-num-names}
\bibliography{<your-bib-database>}

\begin{thebibliography}{99}

\bibitem{NB}
N. Biggs, Algebraic Graph Theory, Cambridge University Press, Cambridge, UK, 1974.

\bibitem{JU}
J. A. Bondy, U. S. R. Murty, Graph Theory,  Graduate Texts in Mathematics, Springer, New York, 2008.

\bibitem{RH}
R. A. Brualdi, H. J. Ryser, Combinatorial Matrix Theory, Cambridge University Press, Cambridge, UK, 1991.

\bibitem{FR}
F. R. K. Chung, R. P. Langlands, A combinatorial Laplacian with vertex weights, J. Combin. Theory Ser. A  \textbf{75} (1996) 316-327.

\bibitem{JS}
J. Huang, S. Li, On the normalised Laplacian spectrum, degree-Kirchhoff index and spanning trees of graphs, Bull. Aust. Math. Soc. \textbf{91} (2015) 353-367.

\bibitem{LL}
L. Levine, Sandpile groups and spanning trees of directed line graphs, J. Combin. Theory Ser. A \textbf{118} (2011) 350-364.

\bibitem{LZM}
J. Liu, X. D. Zhang and J. X. Meng, The properties of middle digraphs, Ars Combin.  \textbf{101} (2011) 153-160.

\bibitem{JBO}
J. B. Orlin, Line-digraphs, arborescences, and theorems of Tutte and Knuth, J. Combin. Theory Ser. B  \textbf{25} (1978) 187-198.


\bibitem{IS}
I. Sato, New proofs for Levine's theorems,  Linear Algebra Appl. \textbf{435} (2011) 943-952.



\bibitem{ISZ}
I. Sato, Zeta functions and complexities of middle graphs of semiregular bipartite graphs, Discrete Math. \textbf{335} (2014) 92-99.


\bibitem{WY}
W. Yan, Enumeration of spanning trees of middle graphs, Appl. Math. Comput. \textbf{307} (2017) 239-243.


\bibitem{CZ}
C. Zamfirescu, Local and global characterizations of middle digraphs, in: The Theory and Applications of Graphs (G. Chartrand, Y. Alavi, D.L. Goldsmith, L. Lesniak-Foster and D.R. Lick, Eds.) Wiley, New York (1981) 593-607.


\bibitem{CMDZ}
CMD Zamfirescu, Transformations of digraphs viewed as intersection digraphs, Springer Proc. Math. Stat. \textbf{148} (2016) 27-35.

\bibitem{JC}
J. Zhou, C. Bu, The enumeration of spanning tree of weighted graphs,  J. Algebraic Combin.  \textbf{53} (2020) 1-34.

\end{thebibliography}

\end{document}